\newcommand{\dd}{\mathrm{d}}
\newcommand{\E}{\mathbb{E}}
\newcommand{\1}{\textbf{1}}
\newcommand{\R}{\mathbb{R}}
\newcommand{\p}[1]{\mathbb{P}\left( #1 \right)}
\newcommand{\scal}[2]{\left\langle #1, #2 \right\rangle}
\DeclareMathOperator{\conv}{conv}
\newcommand{\red}{}
\newtheorem{theorem}{Theorem}
\newtheorem{lemma}[theorem]{Lemma}
\theoremstyle{remark}
\newtheorem{remark}[theorem]{Remark}
\theoremstyle{definition}
\title{\vspace{-3em}A note on volume thresholds for random polytopes}
\author{
Debsoumya Chakraborti\thanks{Carnegie Mellon University; Pittsburgh, PA 15213, USA. Email: dchakrab@andrew.cmu.edu.} \ \ Tomasz Tkocz\thanks{Carnegie Mellon University; Pittsburgh, PA 15213, USA. Email: ttkocz@math.cmu.edu. Research supported in part by the Collaboration Grants from the Simons Foundation and NSF grant DMS-1955175.}
\ \
Beatrice-Helen Vritsiou\thanks{University of Alberta; Edmonton, AB T6G 2G1, Canada. Email:  vritsiou@ualberta.ca.}
}
\date{1st November 2020}
\begin{document}

\maketitle

\begin{abstract}
We study the expected volume of random polytopes generated by taking the convex hull of independent identically distributed points from a given distribution. We show that, for log-concave distributions supported on convex bodies, we need at least exponentially many (in dimension) samples for the expected volume to be significant, and that super-exponentially many samples suffice for {\red $\kappa$-}concave measures when their parameter of concavity {\red $\kappa$} is positive.
\end{abstract}

\bigskip

\begin{footnotesize}
\noindent {\em 2020 Mathematics Subject Classification.} Primary 52A23; Secondary 52A22, 60D05;

\noindent {\em Key words.} random polytopes, convex bodies, log-concave measures, volume threshold, high dimensions. 
\end{footnotesize}

\bigskip

\section{Introduction}

Let $X_1, X_2,\ldots$ be independent identically distributed (i.i.d.) random vectors uniform on a set $K$ in $\R^n$. Let
\begin{equation}\label{eq:def-KN}
K_N = \conv\{X_1,\ldots,X_N\}.
\end{equation}

We are interested in bounds on the number $N$ of points needed for the volume $|K_N|$ of $K_N$ to be asymptotic in expectation to the volume $|\conv K|$ of the convex hull of $K$ as $n \to \infty$. In the pioneering work \cite{Dy}, Dyer, F\"uredi and McDiarmid established sharp thresholds for the vertices of the cube $K = \{-1,1\}^n$, as well as for the solid cube $K = [-1,1]^n$. More precisely, they showed that for every $\varepsilon > 0$,
\begin{equation}\label{eq:cube-thre}
\frac{\E|K_N|}{2^n} \xrightarrow[n \to \infty]{} \begin{cases} 0, & \text{ if }N \leq (\nu - \varepsilon)^n \\ 1, & \text{ if }N \geq (\nu + \varepsilon)^n \end{cases}\,,
\end{equation}
where for $K = \{-1,1\}^n$, we have $\nu = 2/\sqrt{e} = 1.213...$ and for $K = [-1,1]^n$, we have $\nu = 2\pi e^{-\gamma-1/2} = 2.139...$ (see also \cite{FS}). For further generalisations establishing sharp exponential thresholds see \cite{Gian} (in a situation when the $X_i$ are not uniform on a set but have i.i.d. components compactly supported in an interval).

The case of a Euclidean ball is different. Pivovarov showed in \cite{Piv} (see also \cite{beta1}) that when 
\[
K = B_2^n= \{x \in \R^n, \ \sum x_i^2 \leq 1\},\]
 the threshold is superexponential, that is for every $\varepsilon > 0$,
\begin{equation}\label{eq:ball-thre}
\frac{\E|K_N|}{|K|} \xrightarrow[n \to \infty]{} \begin{cases} 0, & \text{ if }N \leq e^{(1-\varepsilon)\cdot\frac{1}{2}n\log n} \\ 1, & \text{ if }N \geq e^{(1+\varepsilon)\cdot\frac{1}{2}n\log n} \end{cases}\, .
\end{equation}
He additionally considered the situation when the $X_i$ are not uniform on a set but are Gaussian. 

In recent works \cite{beta1, beta2}, the authors study the case of the $X_i$ having rotationally invariant densities of the form ${\red \text{const} \cdot}(1-\sum x_i^2)^{\beta}\1_{B_2^n}$, $\beta > -1$. This is the so-called Beta model of random polytopes attracting considerable attention in stochastic geometry. In particular, $\beta = 0$ corresponds to the uniform distribution on the unit ball and the limiting case $\beta \to -1$ corresponds to the uniform distribution on the unit sphere. As established in \cite{beta1}, the threshold here is as follows: for every constant $\varepsilon \in (0,1)$ and sequences $N = N(n)$, $-1 < \beta = \beta(n)$, we have
\begin{equation}\label{eq:beta-thre}
\frac{\E|K_N|}{|B_2^n|} \xrightarrow[n \to \infty]{} \begin{cases} 0, & \text{ if }N \leq e^{(1-\varepsilon)(\frac{n}{2}+\beta)\log n} \\ 1, & \text{ if }N \geq e^{(1+\varepsilon)(\frac{n}{2}+\beta)\log n} \end{cases}\,,
\end{equation}
which was further refined in \cite{beta2}: for every positive constant $c$, the limit is $e^{-c}$ if $N$ grows like $e^{(\frac{n}{2}+\beta)\log\frac{n}{2c}}$ as $n\to\infty$.

We would like to focus on establishing general bounds for some large natural families of distributions. Specifically, suppose that for each dimension $n$, we are given a family $\{\mu_{n,i}\}_{i \in I_n}$ of probability measures such that each $\mu_{n,i}$ is supported on a compact set $V_{n,i}$ in $\R^n$. We would like to find the largest number $N_0$ and the smallest number $N_1$ (in terms of $n$ and some parameters of the family) such that for every $\mu_{n,i}$ from the family, $\frac{\E|K_N|}{|\mathrm{conv} V_{n,i}|} = o(1)$ for $N \leq N_0$ and $\frac{\E|K_N|}{|\mathrm{conv} V_{n,i}|} = 1-o(1)$ for $N \geq N_1$ as $n \to \infty$ ($K_N$ is a random polytope given by \eqref{eq:def-KN} with $X_1, X_2,\ldots$ being i.i.d. drawn from $\mu_{n,i}$).

For instance, the examples of the cube and the ball suggest that for the family of uniform measures on convex bodies, $N_0$ is exponential and $N_1$ is super-exponential in~$n$. 

In fact, the latter can be quickly deduced from a classical result by Groemer from \cite{Gr}, combined with the thresholds for Euclidean balls established by Pivovarov in \cite{Piv}. Groemer's theorem says that for every $N > n$, we have 
\[
\E|\conv\{X_1,\ldots,X_N\}| \geq \E|\conv\{Y_1,\ldots,Y_N\}|,
\]
where the $X_i$ are i.i.d. uniform on a convex set $K$ and the $Y_i$ are i.i.d. uniform on a Euclidean ball with the same volume as $K$. We thus get from \eqref{eq:ball-thre} that
\begin{equation}\label{eq:grom-thre}
{\red \frac{1}{|K|}}\E|\conv\{X_1,\ldots,X_N\}| = 1-o(1),
\end{equation}
as long as $N \geq e^{(1+\varepsilon)\frac{n}{2}\log n}$.

In this work, we shall establish an exponential bound on $N_0$ for the family of log-concave distributions on convex sets and extend \eqref{eq:grom-thre} to the family of the so-called $\kappa$\nobreakdash-concave distributions.

\paragraph{Acknowledgements.} We would like to thank Alan Frieze for many helpful discussions. {\red We are indebted to an anonymous referee for carefully reading our manuscript and for a very helpful report significantly improving the paper.}


\section{Results}

Recall that a Borel probability measure $\mu$ on $\R^n$ is $\kappa$-concave, $\kappa \in [-\infty,\frac{1}{n}]$, if for every $\lambda \in [0,1]$ and every Borel sets $A$, $B$ in $\R^n$, we have
\[
\mu(\lambda A + (1-\lambda) B) \geq \Big(\lambda \mu(A)^{\kappa} + (1-\lambda)\mu(B)^{\kappa}\Big)^{1/\kappa}
\]
{\red (for background on $\kappa$-concave measures see e.g. \cite{Bor1, Bor2} or Section 2.1.1 in \cite{Gian2}).}
We say that a random vector is $\kappa$-concave if its law is $\kappa$-concave. For example, vectors uniform on convex bodies in $\R^n$ are $1/n$-concave {\red by the Brunn-Minkowski inequality}. The right hand side increases with $\kappa$, so as $\kappa$ increases, the class of $\kappa$-concave measures becomes smaller. It is a natural extension of the class of log-concave random vectors, corresponding to $\kappa = 0$, with the right hand side in the defining inequality understood as the limit $\kappa\to 0+$. Many results for convex sets have analogues for concave measures (for instance, see \cite{B1, B2, BM0, FGP, Gue}). 

Consider $\kappa \in (0,1/n)$. {\red By Borell's theorem from \cite{Bor1}}, a $\kappa$-concave random vector is supported on a convex body, {\red has a density} and its density is a $1/\beta$-concave function, that is of the form $h^{\beta}$ for a concave function $h$ with $\beta = \kappa^{-1}-n$. The notion of $\kappa$-concavity was introduced and studied by Borell in \cite{Bor1, Bor2}, which are standard references on this topic. 

We also recall that a random vector $X$ in $\R^n$ is isotropic if it is centred, that is $\E X = 0$ and its covariance matrix $\text{Cov}(X) = [\E X_iX_j]_{i,j \leq n}$ is the identity matrix. The isotropic constant $L_X$ of a log-concave random vector $X$ {\red which is isotropic and has} density $f$ {\red on $\R^n$} is defined as $L_X = (\text{ess sup}_{\R^n} f)^{1/n}$ (see e.g. \cite{Gian2}). {\red By Borell's theorem, every log-concave random vector in $\R^n$ is supported on an affine subspace of $\R^n$ and has a density with respect to Lebesgue measure on that subspace.}

Our first main result {\red suggests a necessary condition on $N$ (in the form of a lower bound for $N$ exponential in the dimension $n$) so that $\E|K_N|$ will be significant in the case} of symmetric log-concave distributions supported in convex bodies. {\red We recall that a measure $\mu$ on $\R^n$ is symmetric (sometimes also called even) if $\mu(A) = \mu(-A)$ for every $\mu$-measurable set $A$ in $\R^n$.}

\begin{theorem}\label{thm:exp-notenough}
Let $\mu$ be a symmetric log-concave probability measure supported on a convex body $K$ in $\R^n$. Let $X_1,X_2,\ldots$ be i.i.d. random vectors distributed according to $\mu$. Let $K_N = \conv\{X_1,\ldots,X_N\}$. There are universal positive constants $c_1, c_2$ such that if $N \leq e^{c_1n/L_\mu^2}$, then 
\[
\frac{\E|K_N|}{|K|} \leq e^{-c_2n/L_\mu^2},
\]
where $L_\mu$ is the isotropic constant of $\mu$.
\end{theorem}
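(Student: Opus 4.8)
The plan is to use that $K_N\subseteq\conv(\mathrm{supp}\,\mu)=K$ together with a halfspace (Tukey) depth estimate for the probability that a fixed point lies in $K_N$. Since both $\E|K_N|/|K|$ and $L_\mu$ are invariant under affine maps, I would first put $\mu$ in isotropic position. Then $\E|K_N|=\int_K\p{x\in K_N}\,\dd x$ by Tonelli, so everything comes down to bounding the integrand. Writing $\delta_\mu(x)=\inf_{\theta\in S^{n-1}}\mu\big(\{y:\scal{y}{\theta}\geq\scal{x}{\theta}\}\big)$ for the half-space depth of $x$ with respect to $\mu$, observe that for any fixed $\theta$ the event that all the $X_i$ lie in $\{\scal{y}{\theta}<\scal{x}{\theta}\}$ — which has probability $\big(1-\mu(\{\scal{y}{\theta}\geq\scal{x}{\theta}\})\big)^N$ — is contained in $\{x\notin K_N\}$ by separation; maximizing over $\theta$ gives $\p{x\notin K_N}\geq(1-\delta_\mu(x))^N$, hence
\[
\p{x\in K_N}\leq\min\{1,\,N\delta_\mu(x)\}.
\]

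Next I would fix $\varepsilon=e^{-Cn/L_\mu^2}$ for a small universal constant $C$ (to be chosen at the end) and split the integral over $K$ according to whether $\delta_\mu(x)\geq\varepsilon$. Bounding the integrand by $1$ on the ``deep'' set $K_{[\varepsilon]}:=\{\delta_\mu\geq\varepsilon\}$ and by $N\delta_\mu(x)<N\varepsilon$ elsewhere yields $\E|K_N|\leq|K_{[\varepsilon]}|+N\varepsilon\,|K|$. As $N\leq e^{c_1n/L_\mu^2}$, the second term is at most $e^{(c_1-C)n/L_\mu^2}|K|$, which is acceptable once $C>c_1$. So the heart of the matter is to show that $K_{[\varepsilon]}$ has volume exponentially (in $n/L_\mu^2$) smaller than $K$.

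For this I would compare $K_{[\varepsilon]}$ to an $L_p$-centroid body of $\mu$. If $x\in K_{[\varepsilon]}$, then for every $\theta$ the halfspace $\{\scal{y}{\theta}\geq\scal{x}{\theta}\}$ has $\mu$-measure at least $\varepsilon$; Markov's inequality applied to the $p$-th absolute moment of $\scal{X}{\theta}$, which equals $h_{Z_p(\mu)}(\theta)^p$, with the choice $p=\log(1/\varepsilon)=Cn/L_\mu^2$, forces $\scal{x}{\theta}\leq\varepsilon^{-1/p}h_{Z_p(\mu)}(\theta)=e\,h_{Z_p(\mu)}(\theta)$. Thus $K_{[\varepsilon]}\subseteq e\,Z_p(\mu)$. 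Now I invoke the standard volume bound $|Z_p(\mu)|^{1/n}\leq c_0\sqrt{p/n}$ for isotropic log-concave $\mu$ and $1\leq p\leq n$ (which is why I will need $C\leq c$, where $c>0$ is the universal lower bound for $L_\mu^2$), together with the trivial estimate $|K|\geq L_\mu^{-n}$ coming from $1=\int_Kf\leq\|f\|_\infty\,|K|$. Combining these,
\[
\frac{|K_{[\varepsilon]}|^{1/n}}{|K|^{1/n}}\leq e\,L_\mu\,|Z_p(\mu)|^{1/n}\leq e\,c_0\,L_\mu\sqrt{p/n}=e\,c_0\sqrt{C},
\]
and the crucial point is that the factors of $L_\mu$ and $n$ cancel, leaving a pure constant. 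Picking $C$ small enough that $e\,c_0\sqrt{C}\leq e^{-c_2/c}\leq e^{-c_2/L_\mu^2}$ gives $|K_{[\varepsilon]}|\leq e^{-c_2n/L_\mu^2}|K|$, and then $c_1:=C-c_2>0$ works (the resulting factor $2$ is harmless since in the non-trivial range $n/L_\mu^2\to\infty$).

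I expect the main technical obstacle to be the sharp centroid-body volume estimate $|Z_p(\mu)|^{1/n}\lesssim\sqrt{p/n}$: the naive reverse-Hölder bound $h_{Z_p}\lesssim p\,h_{Z_2}$ only gives $|Z_p(\mu)|^{1/n}\lesssim p/\sqrt n$, which introduces a stray factor $\sqrt n/L_\mu$ and is useless here, so one genuinely needs the Paouris-type estimate. One should also dispose of the degenerate ranges of $L_\mu$ separately: if $e^{c_1n/L_\mu^2}\leq n$, then every admissible $N$ satisfies $N\leq n$ and $|K_N|=0$, so there is nothing to prove — and this same reduction guarantees $p=Cn/L_\mu^2\geq1$ in the remaining range, which is exactly where the two auxiliary facts above apply. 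Everything else is bookkeeping with universal constants.
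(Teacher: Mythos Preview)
Your proposal is correct and follows essentially the same architecture as the paper: reduce to isotropic position, use the half-space depth bound $\p{x\in K_N}\leq N q(x)$ to get $\E|K_N|\leq |\{q\geq\varepsilon\}|+N\varepsilon|K|$, embed the floating body $\{q\geq\varepsilon\}$ in a dilate of $Z_p(\mu)$ with $p=\log(1/\varepsilon)$, and finish with Paouris' bound $|Z_p(\mu)|^{1/n}\lesssim\sqrt{p/n}$ together with $|K|\geq L_\mu^{-n}$.

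The one noteworthy difference is in how the inclusion $\{q\geq e^{-\alpha}\}\subset c\,Z_\alpha(\mu)$ is obtained. The paper routes this through the Legendre transform of the log-moment generating function, first showing $\{q>e^{-\alpha}\}\subset\{\Lambda_X^\star<\alpha\}$ via the exponential Chebyshev inequality and then invoking a result of Lata\l a--Wojtaszczyk to get $\{\Lambda_X^\star<\alpha\}\subset 4e\,Z_\alpha(X)$. Your direct Markov argument with the $p$-th moment, yielding $\{q\geq\varepsilon\}\subset e\,Z_{\log(1/\varepsilon)}(\mu)$, is more elementary and gives a better constant; it bypasses the Lata\l a--Wojtaszczyk citation entirely. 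Otherwise the proofs coincide, including the final bookkeeping (both sweep the range restrictions on $p$ and small-$n$ edge cases into ``adjust the constants'').
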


Our second main result provides a {\red sufficient condition on $N$ so that $\E|K_N|$ will be significant in the case} 
 of $\kappa$-concave distributions. 

\begin{theorem}\label{thm:superexp-enough}
Let $\mu$ be a symmetric $\kappa$-concave measure on $\R^n$ with $\kappa \in (0,\frac{1}{n})$, supported on a convex body $K$ in $\R^n$. Let $X_1,X_2,\ldots$ be i.i.d. random vectors uniformly distributed according to $\mu$. Let $K_N = \conv\{X_1,\ldots,X_N\}$. There is a universal constant $C$ such that for every $\omega > C$, if $N \geq e^{\frac{1}{\kappa}(\log n + 2\log\omega)}$, then 
\[
\frac{\E|K_N|}{|K|} \geq 1-\frac{1}{\omega}.
\]
\end{theorem}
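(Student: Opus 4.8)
The plan is to bound the expected \emph{missed} volume $|K| - \E|K_N|$ by integrating the probability that a given point $x \in K$ fails to lie in $K_N$, and to show this probability is tiny once $N$ is super-exponentially large. Writing $\mu$ for the density (which exists and is $1/\beta$-concave with $\beta = \kappa^{-1}-n$ by Borell's theorem), we have
\begin{equation*}
|K| - \E|K_N| = \int_K \p{x \notin K_N}\, \dd x.
\end{equation*}
If $x \notin K_N$, then by the separating hyperplane theorem there is a halfspace $H$ (through $x$, or with $x$ on its boundary) containing none of the points $X_1,\dots,X_N$; consequently $X_1,\dots,X_N$ all lie in the complementary halfspace $H^-$, which (since $x$ can be taken on $\partial H$) has $\mu(H^-) \leq 1 - \mu(H_x)$ where $H_x$ is the \emph{open} halfspace cut off by the tangent hyperplane at $x$ on the far side. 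So the key quantity is $p(x) := \mu(H_x)$, the smallest $\mu$-mass of an open halfspace not containing $x$; then $\p{x \notin K_N} \leq (1-p(x))^N$ after a union bound over a suitable net of directions, or more cleanly by the standard trick of conditioning on which of the $X_i$ is extremal in the relevant direction.

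The heart of the argument is a lower bound on $p(x)$ for $x$ ranging over a large-measure subset of $K$. Here is where $\kappa$-concavity (with $\kappa > 0$) enters decisively. For a symmetric $\kappa$-concave measure, one has strong lower bounds on the mass of halfspaces: if $x = (1-t)\cdot 0 + t\cdot y$ with $y \in K$ and $t \in [0,1]$, then taking $A$ to be a halfspace bounded by the hyperplane through $x$ (so $0 \in A$, hence $\mu(A) \geq \tfrac12$ by symmetry) and using $\kappa$-concavity of $\mu$ together with the fact that $K = \mathrm{supp}\,\mu$, one gets that the halfspace on the \emph{other} side of that hyperplane has mass at least something like $\big(c\,(1-t)\big)^{1/\kappa}$, i.e. roughly $(1-t)^{1/\kappa}$ up to constants. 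Concretely: write the "distance from $x$ to the boundary of $K$ along a ray through the origin" as governing $1-t$; then $p(x) \gtrsim (1-t)^{1/\kappa}$. The complementary region where $1-t$ is small — a thin shell near $\partial K$ — has small $\mu$-measure, again by $\kappa$-concavity (the measure of $\{x : 1-t(x) \leq s\}$ is at most $1 - (1-s)^{1/\kappa} \lesssim s/\kappa$ for the relevant range), so it contributes negligibly to the volume integral after scaling by $|K|$.

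Putting the pieces together: split $K = K^{\mathrm{in}} \cup K^{\mathrm{shell}}$ where $K^{\mathrm{shell}} = \{x : 1 - t(x) \leq \delta\}$ for a parameter $\delta$ to be chosen. On $K^{\mathrm{shell}}$ bound $\p{x\notin K_N} \leq 1$ and use $|K^{\mathrm{shell}}|/|K| \lesssim \delta^{?}$ (a volumetric, not measure-theoretic, estimate — here I would either use the measure bound transferred to volume via the density's $1/\beta$-concavity, or directly that $K^{\mathrm{shell}}$ is $K \setminus (1-\delta)K$ so has volume fraction $1-(1-\delta)^n \approx n\delta$). On $K^{\mathrm{in}}$ use $\p{x\notin K_N} \lesssim n\cdot(1-p(x))^N \leq n\,e^{-N p(x)} \leq n\, e^{-N\delta^{1/\kappa}/C}$. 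Choosing $\delta$ so that $\delta^{1/\kappa} = C'\log(n\omega)/N$, i.e. $\delta \approx \big(\tfrac{\log(n\omega)}{N}\big)^{\kappa}$, and demanding $N \geq e^{\frac1\kappa(\log n + 2\log\omega)} = (n\omega^2)^{1/\kappa}$ makes $\delta \lesssim (n\omega)^{\kappa/?}/(n\omega^2) \lesssim 1/\omega$ after the routine bookkeeping, so both contributions to $(|K|-\E|K_N|)/|K|$ are $O(1/\omega)$.

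The main obstacle I anticipate is making the lower bound $p(x) \gtrsim (1-t(x))^{1/\kappa}$ fully rigorous with the right universal constants, in particular handling the passage from "there exists a bad halfspace" to a bound uniform over directions without losing more than a polynomial-in-$n$ factor (which is harmless since $N$ is super-exponential), and correctly tracking how symmetry plus $\kappa$-concavity yields the $\tfrac12$-mass halfspace through the origin-centred scaling. A secondary technical point is choosing the split parameter $\delta$ so that the exponent $1/\kappa$ appearing in $p(x)$ and the exponent $n$ appearing in the volume of the shell interact correctly — this is exactly what forces the threshold to be $n^{1/\kappa}$ rather than, say, $2^n$, and it must be done carefully to land on the clean bound $1-\tfrac1\omega$.
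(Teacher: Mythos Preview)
Your approach is essentially the paper's: both arguments lower-bound the halfspace depth $q_X(x)$ by a multiple of $(1-\|x\|_K)^{1/\kappa}$, take $A=(1-\delta)K$ so that the shell $K\setminus A$ has volume fraction $\leq n\delta$, and then invoke the Dyer--F\"uredi--McDiarmid ``central lemma'' to control the probability that $A\not\subset K_N$. The paper packages the last step as Lemma~\ref{lm:central} (giving $\E|K_N|\geq |A|\big(1-2\binom{N}{n}(1-\inf_A q)^{N-n}\big)$) rather than integrating $\p{x\notin K_N}$ pointwise, but these are equivalent. One correction: your bound $\p{x\notin K_N}\lesssim n\,(1-p(x))^N$ is not justifiable with a factor $n$; the standard argument (union-bound over facets, i.e.\ over $n$-subsets of the sample) gives $2\binom{N}{n}(1-p(x))^{N-n}$, and a net on $S^{n-1}$ would also cost an exponential, not polynomial, factor. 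This does not break the proof---$\binom{N}{n}\leq N^n$ is still crushed by $e^{-\beta^{1/\kappa}N}$ when $N\geq (n\omega^2)^{1/\kappa}$---but your ``polynomial-in-$n$'' expectation is too optimistic and the final bookkeeping must be redone accordingly.

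For the key lower bound $p(x)\gtrsim(1-\|x\|_K)^{1/\kappa}$, the paper (Lemma~\ref{lm:q-lowbd}) passes to the one-dimensional marginal density, uses that $g^{\kappa/(1-\kappa)}$ is concave, and integrates, obtaining $q_X(x)\geq\frac{\kappa}{16}(1-\|x\|_K)^{1/\kappa}$ after normalising to isotropic position. Your sketch via $\kappa$-concavity of the measure itself can be made rigorous and is arguably cleaner: the one-dimensional pushforward $\nu$ of $\mu$ along $\theta$ is $\kappa$-concave, symmetric, supported on $[-b,b]$ with $b=h_K(\theta)$; applying the defining inequality to $A=[0,b]$ (mass $\tfrac12$) and $B=[b-\varepsilon,b]$ and letting $\varepsilon\to0$ gives $\nu([a,b])\geq\tfrac12(1-a/b)^{1/\kappa}$ directly, with no need for isotropy and a better constant. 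Either route feeds into the same endgame.
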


\section{{\red Floating bodies}}

It turns out that the following quasi-concave function plays a crucial role in estimates for the expected volume of the convex hull of random points (see \cite{Bar-surv, Bar1, Dy}): for a random vector $X$ in $\R^n$ define 
\begin{equation}\label{eq:def-q}
q_X(x) = \inf\{\p{X \in H}, \ H \text{ half-space containing $x$}\}, \qquad x \in \R^n.
\end{equation}
It is clear that $q(\lambda x + (1-\lambda) y) \geq \min\{q(x), q(y)\}$, because if a half-space $H$ contains $\lambda x + (1-\lambda)y$, it also contains $x$ or $y$. Consequently, superlevel sets 
\begin{equation}\label{eq:L-def}
L_{q_X, \delta} = \{x \in \R^n, \ q_X(x) \geq \delta\}
\end{equation}
of this function are convex. Another way of looking at these sets is by noting that they are intersections of half-spaces: $L_{q_X, \delta} = \bigcap\{H: \,\text{$H$ is a half-space},\, \p{X \in H} > 1- \delta\}$. When $X$ is uniform on a convex set $K$, they are called convex floating bodies ($K \setminus L_{q_X, \delta}$ is called a wet part). The function $q_X$ in statistics is called the Tukey or half-space depth of $X$. The two notions have been recently surveyed in \cite{Schu}. 

A key lemma from \cite{Dy} relates the volume of random convex hulls of i.i.d. samples of $X$ to the volume of the level sets $L_{q_X, \delta}$.  Bounds on the latter are obtained by a combination of elementary convexity arguments and deep results from asymptotic convex geometry (notably, Paouris' reversal of the $L_p$-affine isoperimetric inequality due to Lutwak, Yang and Zhang). We shall present these and all the necessary background material in Section \ref{sec:aux}. Section \ref{sec:proof} is devoted to our proofs.

\section{Auxiliary results}\label{sec:aux}

\subsection{Log-concave and $\kappa$-concave measures}\label{sec:kappa-conc}

Theorem 4.3 from \cite{Bor2} provides in particular the following stability of $\kappa$-concavity with respect to taking marginals: if $\kappa \in (0,\frac1n)$ and $f$ is the density of a $\kappa$-concave random vector in $\R^n$, then
\begin{equation}\label{eq:marg}
\text{the marginal } \ x \mapsto \int_{\R^{n-1}} f(x,y) \dd y \ \text{ is a } \frac{\kappa}{1-\kappa}\text{-concave function}.
\end{equation}
We will also need the following basic estimate: if $g\colon \R\to[0,+\infty)$ is the density of a log-concave random variable $X$ with $\E X = 0$ and $\E X^2 = 1$, then
\begin{equation}\label{eq:log-c}
\frac{1}{2\sqrt{3}e} \leq g(0) \leq \sqrt{2}
\end{equation}
(see e.g. Chapter 10.6 in \cite{Gian1}).

\subsection{Central lemma}\label{sec:central-lemma}

{\red The idea of using floating bodies to estimate volume of random polytopes goes back to \cite{Bar1}}. The following is a key lemma from \cite{Dy} (called by the authors ``central'') about asymptotically matching upper and lower bounds for the volume of the random convex hull.

\begin{lemma}[\cite{Dy}]\label{lm:central}
Suppose $X_1, X_2,\ldots$ are i.i.d. random vectors in $\R^n$. Let $K_N = \conv\{X_1,\ldots,X_N\}$ and define $q = q_{X_1}$ by \eqref{eq:def-q}.
Then for every {\red Borel} subset $A$ of $\R^n$, we have
\begin{equation}\label{eq:upp-bd}
\E|K_N| \leq |A| + N\cdot \left(\sup_{A^c} q\right) \cdot |A^c \cap \{x \in \R^n, \ q(x) > 0\}|
\end{equation}
and, {\red if additionally $\mu$ assigns zero mass to every hyperplane in $\R^n$}, then
\begin{equation}\label{eq:low-bd}
\E|K_N| \geq |A|\left(1 - 2\binom{N}{n}\left(1 - \inf_A q\right)^{N-n}\right).
\end{equation}
\end{lemma}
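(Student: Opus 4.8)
The plan is to use the Fubini identity $\E|K_N|=\int_{\R^n}\p{x\in K_N}\,\dd x$ and to estimate the integrand on $A$ trivially (by $1$, or by passing to the complementary event $\{x\notin K_N\}$) and on the relevant complement by a power-of-$q$ bound; the whole proof then reduces to two pointwise probability estimates.

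For the upper bound \eqref{eq:upp-bd}, I would first record the deterministic fact that if $x\in\conv\{X_1,\dots,X_N\}$, then every closed half-space $H$ containing $x$ contains at least one $X_i$ — writing $x$ as a convex combination of the $X_i$ and testing against the linear functional defining $H$, the functional cannot lie strictly below its threshold at all of the $X_i$. Hence $\{x\in K_N\}\subseteq\bigcup_i\{X_i\in H\}$ for every such $H$, so $\p{x\in K_N}\le N\,\p{X_1\in H}$ by the union bound, and taking the infimum over $H$ (it is enough to consider $H$ with $x$ on its bounding hyperplane) gives $\p{x\in K_N}\le N q(x)$. In particular $\p{x\in K_N}=0$ whenever $q(x)=0$, so $\int_{A^c}\p{x\in K_N}\,\dd x\le N\big(\sup_{A^c}q\big)\,\big|A^c\cap\{q>0\}\big|$, and adding $\int_A\p{x\in K_N}\,\dd x\le|A|$ yields \eqref{eq:upp-bd}. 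Note that this argument uses no hypothesis on $\mu$ charging hyperplanes, consistently with the statement.

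For the lower bound \eqref{eq:low-bd}, since $\E|K_N|\ge\int_A\p{x\in K_N}\,\dd x=|A|-\int_A\p{x\notin K_N}\,\dd x$, it suffices to prove $\p{x\notin K_N}\le 2\binom{N}{n}\big(1-q(x)\big)^{N-n}$ for each fixed $x$, and then use $q(x)\ge\inf_A q$ for $x\in A$. Fix $x$. Since $\mu$ charges no hyperplane, I would work on the almost sure event that the $X_i$ are in general position (any $n$ affinely independent, no $n+1$ coplanar, hence — as $N>n$ — $K_N$ full-dimensional) and that $x$ lies on no hyperplane spanned by $n$ of the $X_i$. On this event I claim that $x\notin K_N$ forces an $n$-element set $S\subseteq\{1,\dots,N\}$ for which $\mathrm{aff}\{X_i:i\in S\}$ separates $x$ from the remaining points $\{X_j:j\notin S\}$. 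Granting the claim, a union bound over the $\binom{N}{n}$ choices of $S$ reduces matters to bounding $\p{F_S}$, where $F_S$ is the event that all $X_j$ with $j\notin S$ lie in the closed half-space bounded by $\mathrm{aff}\{X_i:i\in S\}$ that avoids $x$: conditioning on $\{X_i:i\in S\}$ fixes this half-space, its complement is a closed half-space containing $x$ and hence of $\mu$-mass at least $q(x)$, so each of the remaining $N-n$ independent points lies in the half-space defining $F_S$ with probability at most $1-q(x)$; therefore $\p{F_S}\le(1-q(x))^{N-n}$ and $\p{x\notin K_N}\le\binom{N}{n}(1-q(x))^{N-n}$, comfortably inside the asserted bound.

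The step I expect to be the crux is the geometric claim. Given $x\notin K_N$, I would let $p$ be the metric projection of $x$ onto $K_N$ and $F$ the face of $K_N$ containing $p$ in its relative interior; the variational inequality for the projection shows that the hyperplane through $p$ normal to $x-p$ supports $K_N$, strictly separates $x$, and (meeting the relative interior of $F$) contains $\mathrm{aff}(F)$. If $\dim F=n-1$ we are done, $F$ being a facet spanned by exactly $n$ of the $X_i$. Otherwise $N_{K_N}(F)$, the normal cone at $F$, has dimension at least $2$ and — $K_N$ being a bounded polytope — is pointed, so $N_{K_N}(F)\cap S^{n-1}$ is a spherical polytope lying in an open hemisphere, whose vertices are the outer unit normals of the facets of $K_N$ containing $F$; the unit vector $(x-p)/|x-p|$ lies in this spherical polytope and pairs positively with $x-p$, and since every point of such a spherical polytope is, after renormalisation, a non-negative combination of its vertices, some vertex $v$ also pairs positively with $x-p$ — the corresponding facet is spanned by exactly $n$ of the $X_i$ and its affine hull separates $x$ from all the other points, which proves the claim. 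The remaining issues — distinguishing open from closed half-spaces so that $\mu(\text{hyperplane})=0$ may be invoked, and discarding the measure-zero exceptional events — are routine.
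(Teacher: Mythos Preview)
The paper does not give its own proof of this lemma; it simply cites \cite{Dy} (and \cite{Gian}) with the remark that the argument there, written for the cube, carries over verbatim. Your reconstruction is correct and is essentially the argument from \cite{Dy}: the Fubini identity $\E|K_N|=\int \p{x\in K_N}\,\dd x$, the pointwise upper bound $\p{x\in K_N}\le Nq(x)$ via a union bound over the $X_i$ for any half-space through $x$, and for the lower bound the observation that $x\notin K_N$ forces some facet-defining hyperplane of $K_N$ to separate $x$ from the remaining $N-n$ sample points, followed by a union bound over $n$-subsets and conditioning.

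Two minor comments. First, your proof of the geometric claim via metric projection and normal cones is correct but more elaborate than necessary: since a full-dimensional polytope is the intersection of the closed half-spaces bounded by its facet hyperplanes, $x\notin K_N$ immediately gives a facet hyperplane with $x$ strictly on the outer side, and by general position that facet has exactly $n$ vertices among the $X_i$. Second, as you note, your argument actually yields the bound $\binom{N}{n}(1-q(x))^{N-n}$ without the factor~$2$; the factor $2$ in \eqref{eq:low-bd} is an artefact of the formulation in \cite{Dy} and is harmless for all applications in this paper.
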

(The proof therein concerns only the cube, but their argument repeated verbatim justifies our general situation as well -- see also \cite{Gian}.)

\subsection{Bounds related to the function $q$}

Lemma \ref{lm:central} is applied to level sets $L_{q,\delta}$ of the function $q$ (see \eqref{eq:L-def}). We gather here several remarks concerning bounds for the volume of such sets. For the upper bound, we will need the containment $L_{q,\delta} \subset cZ_{\alpha}(X)$, where $c$ is a universal constant and $Z_{\alpha}$ is the centroid body (defined below). This was perhaps first observed in Theorem 2.2 in \cite{PW} (with a reverse inclusion as well). We recall an argument below. 

\begin{remark}\label{rem:formula-q}
Plainly, for the infimum in the definition \eqref{eq:def-q} of $q_X(x)$, it is enough to take half-spaces for which $x$ is on the boundary, that is
\begin{equation}\label{eq:fromula-q}
q_X(x) = \inf_{\theta \in \R^n} \p{ \scal{X-x}{\theta} \geq 0},
\end{equation}
where $\scal{u}{v} = \sum_i u_iv_i$ is the standard scalar product in $\R^n$. Of course, by homogeneity, this infimum can be taken only over unit vectors. We also remark that by Chebyshev's inequality,
\[
\p{\scal{X-x}{\theta}\geq 0} \leq e^{-\scal{\theta}{x}}\E e^{\scal{\theta}{X}}.
\]
Consequently,
\[
q_X(x) \leq \exp\left(-\sup_{\theta \in \R^n}\left\{\scal{\theta}{x} - \log\E e^{\scal{\theta}{X}} \right\}  \right)
\]
and we have arrived at the Legendre transform $\Lambda_X^\star$ of the log-moment generating function $\Lambda_X$ of $X$,
\[
\Lambda_X(x) = \log \E e^{\scal{x}{X}} \qquad \text{and} \qquad \Lambda_X^\star(x) = \sup_{\theta \in \R^n} \left\{\scal{\theta}{x} - \Lambda_X(\theta)\right\}.
\]
Thus, for every $\alpha > 0$, we have
\begin{equation}\label{eq:q-subset-Bt}
\{x \in \R^n, \ q_X(x) > e^{-\alpha}\} \subset \{x \in \R^n, \ \Lambda_X^\star(x) < \alpha\}.
\end{equation}
\end{remark}

\begin{remark}\label{rem:Bt-to-Zp}
The level sets $\{\Lambda_X^\star < \alpha\}$ have appeared in a different context of the so-called optimal concentration inequalities introduced by Lata\l a and Wojtaszczyk in \cite{LW}. Modulo universal constants, they turn out to be equivalent to centroid bodies playing a major role in asymptotic convex geometry (see \cite{LZ, P1, P2, P4, P5}). Specifically, for a random vector $X$ in $\R^n$ and $\alpha \geq 1$, we define its $L_\alpha$-centroid body $Z_\alpha(X)$ by 
\[
Z_\alpha(X) = \{x \in \R^n, \ \sup\{\scal{x}{\theta}, \ \E|\scal{X}{\theta}|^\alpha \leq 1\} \leq 1\}
\]
(equivalently, the support function of $Z_\alpha(X)$ is $\theta \mapsto (\E|\scal{X}{\theta}|^\alpha)^{1/\alpha}$).
By Propositions 3.5 and 3.8 from \cite{LW}, if $X$ is a symmetric log-concave random vector $X$ (in particular, uniform on a symmetric convex body),
\begin{equation}\label{eq:q-in-Zp}
\{\Lambda_X^* < \alpha\} \subset 4eZ_\alpha(X), \qquad \alpha \geq 2.
\end{equation}
(A reverse inclusion $Z_\alpha(X) \subset 2^{1/\alpha}e\{\Lambda_X^* < \alpha\}$ holds for any symmetric random vector, see Proposition 3.2 therein.) 

We shall need an upper bound for the volume of centroid bodies. This was done by Paouris (see \cite{P4}). Specifically, Theorem 5.1.17 from \cite{Gian2} says that if $X$ is an isotropic log-concave random vector in $\R^n$, then
\begin{equation}\label{eq:Paouris}
|Z_\alpha(X)|^{1/n} \leq C\sqrt{\frac{\alpha}{n}}, \qquad 2 \leq \alpha \leq n,
\end{equation} 
where $C$ is a universal constant.
\end{remark}

\begin{remark}
Significant amount of work in \cite{Dy} was devoted to showing that, for the cube, inclusion \eqref{eq:q-subset-Bt} is nearly tight (for \emph{correct} values of $\alpha$, using exponential tilting of measures typically involved in establishing large deviation principles). We shall take a different route and put a direct lower bound on $q_X$ described in the following lemma. Our argument is based on property \eqref{eq:marg}. 
\end{remark}

\begin{lemma}\label{lm:q-lowbd}
Let $\kappa \in (0,\frac{1}{n})$. Let $X$ be a symmetric isotropic $\kappa$-concave random vector supported on a convex body $K$ in $\R^n$. Then for every unit vector $\theta$ in $\R^n$ and $a > 0$, we have 
\begin{equation}\label{eq:q-lowbd-gen}
\p{\scal{X}{\theta} > a} \geq \frac{1}{16}\kappa\left(1 - \frac{a}{h_K(\theta)}\right)^{1/\kappa},
\end{equation}
where $h_K(\theta) = \sup_{y \in K} \scal{y}{\theta}$ is the support function of $K$.
In particular, denoting the norm given by $K$ as $\|\cdot\|_K$, we have
\begin{equation}\label{eq:q-lowbd}
q_X(x) \geq \frac{1}{16}\kappa\left(1 - \|x\|_K\right)^{1/\kappa}, \qquad x \in K.
\end{equation}
\end{lemma}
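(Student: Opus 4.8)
The plan is to reduce \eqref{eq:q-lowbd-gen} to a one-dimensional statement about the marginal of $X$ in the direction $\theta$, to use that — by Borell's theorem in the form \eqref{eq:marg} — this marginal density is still power-concave with a controlled exponent, and to estimate its value at the origin via \eqref{eq:log-c}; the pointwise bound \eqref{eq:q-lowbd} will then follow formally from \eqref{eq:q-lowbd-gen} and the description of $q_X$ in Remark \ref{rem:formula-q}. First I would fix a unit vector $\theta$ and, after an orthogonal change of coordinates (which preserves $\kappa$-concavity, symmetry and isotropy, and carries $K$ to another convex body), assume $\theta = e_1$. Let $Y = \scal{X}{\theta}$, let $g$ be its density, and set $M := h_K(\theta)$. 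By the hypotheses $Y$ is symmetric with $\E Y = 0$ and $\E Y^2 = 1$, and it is supported on $[-M,M]$ since $h_K(-\theta) = h_K(\theta)$ by symmetry of $K$; in particular $1 = \E Y^2 \le M^2$, so $M \ge 1$. By \eqref{eq:marg}, $g$ is $\frac{\kappa}{1-\kappa}$-concave, i.e. $\psi := g^{\kappa/(1-\kappa)}$ is concave on $[-M,M]$; since the exponent $\frac{\kappa}{1-\kappa}$ is positive, $g$ is moreover log-concave.

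Next I would extract a pointwise lower bound for $g$ from concavity. On $[0,M]$ the nonnegative concave function $\psi$ lies above the chord joining $(0,\psi(0))$ to $(M,\psi(M))$, which in turn lies above the chord joining $(0,\psi(0))$ to $(M,0)$; hence $\psi(t) \ge \psi(0)\big(1 - \tfrac{t}{M}\big)$ for $t \in [0,M]$. Raising to the power $\tfrac1\kappa - 1 = \tfrac{1-\kappa}{\kappa} > 0$ and noting $g = \psi^{1/\kappa - 1}$ and $g(0) = \psi(0)^{1/\kappa - 1}$, this becomes $g(t) \ge g(0)\big(1 - \tfrac{t}{M}\big)^{1/\kappa - 1}$ for $t \in [0,M]$.

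Integrating this over $(a,M)$, for $0 < a < M$ we obtain
\[
\p{\scal{X}{\theta} > a} = \int_a^M g(t)\,\dd t \ \ge\ g(0)\int_a^M \left(1 - \frac{t}{M}\right)^{1/\kappa - 1}\dd t \ =\ g(0)\,\kappa\, M\left(1 - \frac{a}{M}\right)^{1/\kappa}.
\]
Since $g$ is a log-concave density with mean $0$ and variance $1$, the lower bound in \eqref{eq:log-c} gives $g(0) \ge \tfrac{1}{2\sqrt{3}\,e}$, so $g(0)M \ge \tfrac{1}{2\sqrt{3}\,e} \ge \tfrac{1}{16}$, which is \eqref{eq:q-lowbd-gen} for $0 < a < h_K(\theta)$; the case $a \ge h_K(\theta)$ is trivial since then both sides vanish.

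Finally, to deduce \eqref{eq:q-lowbd} I would use that $q_X(x) = \inf_\theta \p{\scal{X}{\theta} \ge \scal{x}{\theta}}$ (Remark \ref{rem:formula-q}, infimum over unit vectors $\theta$). Fix $x \in K$ and a unit $\theta$ and set $a = \scal{x}{\theta}$. If $a \le 0$ then $\p{\scal{X}{\theta} \ge a} \ge \p{\scal{X}{\theta} \ge 0} = \tfrac12$, because by symmetry and absolute continuity of $\mu$ the event $\{\scal{X}{\theta} \ge 0\}$ has probability $\tfrac12$, and $\tfrac12 > \tfrac1{16} \ge \tfrac{\kappa}{16}\big(1 - \|x\|_K\big)^{1/\kappa}$. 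If $a > 0$ then $\scal{x}{\theta} = \|x\|_K\,\scal{x/\|x\|_K}{\theta} \le \|x\|_K\, h_K(\theta)$ since $x/\|x\|_K \in K$, hence $1 - a/h_K(\theta) \ge 1 - \|x\|_K \ge 0$, and \eqref{eq:q-lowbd-gen} together with the monotonicity of $t \mapsto t^{1/\kappa}$ on $[0,\infty)$ yields $\p{\scal{X}{\theta} \ge a} \ge \tfrac{\kappa}{16}\big(1 - \|x\|_K\big)^{1/\kappa}$; taking the infimum over $\theta$ gives \eqref{eq:q-lowbd}. The computation is short, so essentially all the content is the structural input \eqref{eq:marg}, and the one delicate point is the bookkeeping of exponents: the pointwise lower bound for $g$ carries the power $\tfrac1\kappa - 1$, the extra unit appearing only after integration, and one must track this to check that the final universal constant clears $\tfrac1{16}$.
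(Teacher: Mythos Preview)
Your proof is correct and follows essentially the same approach as the paper's: reduce to the one-dimensional marginal, use \eqref{eq:marg} to get that $g^{\kappa/(1-\kappa)}$ is concave, bound it below by the chord on $[0,M]$, integrate, and then use \eqref{eq:log-c} together with $M\ge 1$ to clear the constant $\tfrac1{16}$. The only cosmetic differences are that you rotate to $\theta=e_1$ (unnecessary but harmless) and that for \eqref{eq:q-lowbd} you split into the cases $\scal{x}{\theta}\le 0$ and $>0$, whereas the paper uses symmetry to write $\p{\scal{X-x}{\theta}\ge 0}=\p{\scal{X}{\theta}\ge |\scal{x}{\theta}|}$ and applies \eqref{eq:q-lowbd-gen} directly.
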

\begin{proof}
Consider the density $g$ of $\scal{X}{\theta}$. Let $b = h_K(\theta)$. Note that $g$ is supported in $[-b,b]$.
By \eqref{eq:marg}, $g^{\frac{\kappa}{1-\kappa}}$ is concave, thus on $[0,b]$ we can lower-bound it by a linear function whose values agree at the end points,
\[
g(t)^{\frac{\kappa}{1-\kappa}} \geq g(0)^{\frac{\kappa}{1-\kappa}}\left(1 - \frac{t}{b}\right), \qquad t \in [0,b].
\]
This gives
\[
\p{\scal{X}{\theta} > a} = \int_a^{b} g(t) \dd t \geq g(0)\int_a^{b} \left(1 - \frac{t}{b}\right)^{\frac{1-\kappa}{\kappa}} \dd t = \kappa g(0)b\left(1 - \frac{a}{b}\right)^{1/\kappa}.
\]
Since $\scal{X}{\theta}$ is in particular log-concave, by \eqref{eq:log-c}, we have $\frac{1}{2\sqrt{3}e} \leq g(0) \leq \sqrt{2}$. Moreover, by isotropicity,
\[
1 = \E\scal{X}{\theta}^2 = \int_{-b}^b t^2g(t) \dd t \leq \,b^2\!\int_{-b}^b g(t)\dd t = b^2.
\]
Thus, say $g(0)b > \frac{1}{16}$ and we get \eqref{eq:q-lowbd-gen}.
To see \eqref{eq:q-lowbd}, first recall \eqref{eq:fromula-q}. By symmetry, $\p{ \scal{X-x}{\theta} \geq 0} = \p{ \scal{X}{\theta} \geq |\scal{x}{\theta}|}$, so we use \eqref{eq:q-lowbd-gen} with $a = |\scal{\theta}{x}|$ and note that by the definition of $h_K$, $\left|\scal{\frac{x}{\|x\|_K}}{\theta}\right| \leq h_K(\theta)$, so $\frac{|\scal{x}{\theta}|}{h_K(\theta)} \leq \|x\|_K$.
\end{proof}

\section{Proofs}\label{sec:proof}

\subsection{Proof of Theorem \ref{thm:exp-notenough}}

Since the quantity $\frac{\E|K_N|}{|K|}$ does not change under invertible linear transformations applied to $\mu$, without loss of generality we can assume that $\mu$ is isotropic.  Let $q = q_{X_1}$ be defined by \eqref{eq:def-q}. Fix $\alpha > 0$ and apply \eqref{eq:upp-bd} to the set $A = \{x, \ q(x) > e^{-\alpha}\}$. We get
\[
\frac{\E|K_N|}{|K|} \leq \frac{|A|}{|K|} + Ne^{-\alpha}
\]
(we have used $\{x, \ q(x) > 0\} \subset K$ to estimate the last factor in \eqref{eq:upp-bd} by $1$). Combining \eqref{eq:q-subset-Bt}, \eqref{eq:q-in-Zp} and \eqref{eq:Paouris},
\[
|A| \leq |4eZ_{\alpha}(X)| \leq \left(4eC\sqrt{\frac{\alpha}{n}}\right)^n.
\]
Moreover, by the definition of the isotropic constant of $\mu$,
\[
1 = \int_K \dd \mu \leq L_\mu^n|K|.
\]
Thus,
\[
\frac{|A|}{|K|} \leq \left(4eCL_\mu\sqrt{\frac{\alpha}{n}}\right)^n.
\]
We set $\alpha$ such that $4eCL_\mu\sqrt{\frac{\alpha}{n}} = e^{-1}$ and adjust the constants to finish the proof. $\square$

\subsection{Proof of Theorem \ref{thm:superexp-enough}}

As in the proof of Theorem \ref{thm:exp-notenough}, we can assume that $\mu$ is isotropic. Let $q = q_{X_1}$ be defined by \eqref{eq:def-q}.  
Consider $0< \beta < 1$ (to be fixed shortly). By \eqref{eq:low-bd} which we apply to the set $A = \{x \in K, \ q(x) > \beta^{1/\kappa}\}$, we have 
\[
\frac{\E|K_N|}{|K|} \geq \frac{|A|}{|K|}\left(1 - 2\binom{N}{n}\left(1 - \beta^{1/\kappa}\right)^{N-n}\right).
\]
{\red (The extra assumption needed in \eqref{eq:low-bd} is satisfied: by Borell's theorem from \cite{Bor1}, $\mu$ has a density on its support which by our assumption is $n$-dimensional, hence $\mu(H) = 0$ for every hyperplane $H$ in $\R^n$.)} 
By the lower bound on $q$ from \eqref{eq:q-lowbd},
\[
A \supset \{x \in \R^n, \ \|x\|_K \leq 1 - (16\kappa^{-1})^\kappa\beta\},
\]
hence, as long as $(16\kappa^{-1})^\kappa\beta < 1$,
\[
\frac{|A|}{|K|} \geq \left(1 - (16\kappa^{-1})^\kappa\beta\right)^n \geq 1 - n(16\kappa^{-1})^\kappa\beta \geq 1 - 32n\beta.
\]
We choose $\beta$ such that $32n\beta = \frac{1}{2\omega}$ and crudely deal with the second term,
\[
\binom{N}{n}\left(1 - \beta^{1/\kappa}\right)^{N-n} \leq N^ne^{-\beta^{1/\kappa}(N-n)},
\]
which is nonincreasing in $N$ as long as $N \geq n\beta^{-1/\kappa}$. This holds for $\omega$ large enough if, say $N \geq n^{1/\kappa}\omega^{2/\kappa}$. Then we easily conclude that the dominant term above is $e^{-\beta^{1/\kappa}N}$ which yields, say
\[
\frac{\E|K_N|}{|K|} \geq \left(1 - \frac{1}{2\omega}\right)(1-2e^{-\omega^{n/2}}) \geq 1 - \frac{1}{\omega},
\]
provided that $n$ and $\omega$ are large enough. $\square$

\section{Final remarks}

\begin{remark}
Groemer's result used in \eqref{eq:grom-thre} for uniform distributions has been substantially generalised by Paouris and Pivovarov in \cite{PP} to arbitrary distributions with bounded densities. We remark that in contrast to \eqref{eq:grom-thre}, using the extremality result of the ball from \cite{PP} does not seem to help obtain bounds from Theorem \ref{thm:superexp-enough} for two reasons. For one, it concerns bounded densities and rescaling will cost an exponential factor. Moreover, for the example of $\beta$-polytopes from \cite{beta1}, we have that they are generated by $\kappa$-concave measures with $\kappa = \frac{1}{\beta+n}$ and the sharp threshold for the volume is of the order $n^{(\beta+n/2)}$ (see \eqref{eq:ball-thre}). The ball would give that $N_1 = n^{(1+\varepsilon)n/2}$ points is enough. 
\end{remark}

\begin{remark}
The example of beta polytopes from \eqref{eq:ball-thre} shows that the bound on $N$ in Theorem \ref{thm:superexp-enough} has to be at least of the order $n^{\beta + n/2} = n^{\frac{1}{\kappa}-n/2} \geq n^{\frac{1}{2\kappa}}$. Our bound $n^{\frac{1}{\kappa}}$ is perhaps suboptimal. It is not inconceivable that as in the uniform case, the extremal example is supported on a Euclidean ball. 
\end{remark}

\begin{remark}
It is reasonable to ask about sharp thresholds like the ones in \eqref{eq:cube-thre}, \eqref{eq:ball-thre} and \eqref{eq:beta-thre} for other sequences of convex bodies, say simplices, cross-polytopes, or in general $\ell_p$-balls. This is a subject of ongoing work. We refer to \cite{FPT} for recent results establishing exponential nonsharp thresholds for a simplex (i.e. with a gap between the constants for lower and upper bounds).
\end{remark}

\end{document}